


\documentclass[journal]{IEEEtran}      

\IEEEoverridecommandlockouts                              


\usepackage{graphicx}
\usepackage{amsfonts,amssymb}
\usepackage{amsmath}
\usepackage{amsthm}
\usepackage{cases}
\usepackage{mathrsfs}
\usepackage{mathabx}
\usepackage{mathtools} 
\usepackage{savesym}

\usepackage{multirow}
\savesymbol{AND}

\usepackage[table]{xcolor}

\usepackage[colorlinks,
linkcolor=blue,
anchorcolor=blue,
citecolor=red
]{hyperref}
\usepackage{cite}
\usepackage{subcaption}
\usepackage{threeparttable}
\usepackage{booktabs,ragged2e}
\usepackage{siunitx}

\usepackage{tikz}
\usetikzlibrary{patterns,decorations.pathmorphing,positioning}

\usepackage{algorithm}
\usepackage{algorithmicx}
\usepackage{algpseudocode}

\newtheorem{theorem}{Theorem}
\newtheorem{lemma}{Lemma}
\newtheorem{remark}{Remark}

\newtheorem{definition}{Definition}

\newtheorem{problem}{Problem}

\newcommand{\calP}{\mathcal{P}}
\newcommand{\calQ}{\mathcal{Q}}
\newcommand{\F}{\mathcal{F}}
\newcommand{\diff}{\mathrm{d}}

\newcommand{\tr}{\mathrm{Tr}}

\newcommand{\rank}{\mathrm{Rank}}

\begin{document}

	\title{\LARGE \bf
		An Optimal Projection Framework for Structure‑Preserving Model Reduction of Linear Systems}
	
	\author{Xiaodong Cheng
		\thanks{
			X. Cheng is with the Mathematical and Statistical Methods Group (Biometris), Wageningen University \& Research,
			6700 AA Wageningen, The Netherlands.
			{\tt\small xiaodong.cheng@wur.nl}}
	}
	
	\maketitle

	
	\begin{abstract}
		
		This paper presents a model order reduction framework for linear systems where the $\mathcal{H}_2$ optimization is
		incorporated with the Petrov-Galerkin projection to preserve a wide range of system properties, including stability, bounded realness, sector boundedness, and dissipativity. The model reduction problem is formulated in a nonconvex optimization setting on the Stiefel manifold, aiming to minimize the $\mathcal{H}_2$ norm of the approximation error between the full-order and reduced-order models. The explicit expression for the gradient of the objective function is derived, and a gradient descent procedure is applied to seek for a (local) minimum, followed by a theoretical analysis of the algorithms. Finally, the performance of the proposed method is demonstrated by a numerical example.
		
	\end{abstract}

	\section{Introduction}
	
	Large-scale systems, including electric circuits, structural mechanics, and microelectromechanical systems, are often modeled by high-dimensional dynamical equations which can be computationally demanding for numerical simulations, control design, and optimization.
	This spurs the development of model reduction techniques to generate reduced-order models that capture the pertinent behavior  of the original systems at a much lower computational cost, 
	The reduced-order models can replace their complex counterparts in the control and optimization, resulting in a shorter design cycle time, 
	see \cite{obinata2012model,Gugercin2004Survey} for an overview.
	Many physical systems possess important structural properties such as stability, passivity, or dissipativity, which determine the system behaviour and are crucial for the control and optimization of the systems. It is therefore desirable that reduced‑order models preserve these key properties, ensuring that they remain physically meaningful and suitable for subsequent control and optimization tasks.
	
	Different methods have been developed for reducing large-scale dynamical systems with specific structural properties. The classic balanced truncation approach \cite{moore1981principal} naturally retains system stability and minimality. Various extensions of this approach have been proposed to preserve dissipativity and port-Hamiltonian structures, see e.g., \cite{polyuga2012effort,guiver2013error,kawano2018structure,salehi2021passivity}.
	Krylov subspace methods have also been widely used for the reduction of passive systems, see e.g., \cite{antoulas2005passive,sorensen2005passivity,gugercin2012pH,polyuga2010pH,wolf2010passivity,Ionescu2013,hauschild2019pH,Schwerdtner2020structure,breiten2022passivity}. A reduction method for dissipative Hamiltonian systems is presented in \cite{afkham2019structure}, which adopts a symplectic time integrator to conserve the total energy of a Hamiltonian system. However, most existing methods are tailored to preserve a single structural property and often lack guarantees of optimality with respect to any specific performance criterion.
	In contrast, the works in e.g., \cite{Yu1999approximate,vanDooren2008H2,beattie2007krylov,beattie2009trust,gugercin2008H2,sato2017structure,sato2018pHsys,sato2021book,Jiang2019model,moser2020new} formulate model reduction problem in the context of nonconvex $\mathcal{H}_2$ optimization, aiming to minimize the $\mathcal{H}_2$ error between original and reduced-order models.
	In particular, \cite{beattie2007krylov,beattie2009trust,gugercin2008H2} use pole-residue interpolation to derive first-order $\mathcal{H}_2$ optimality conditions, and thereby the rational Krylov algorithm \cite{beattie2007krylov,gugercin2008H2} or the trust region method \cite{beattie2009trust} can be applied to converge to a reduced-order model satisfying first-order optimality conditions with respect to the $\mathcal{H}_2$ error criteria. Nevertheless, structure preservation is not the primary concern of these methods.
	Within the pole-residue framework, \cite{moser2020new} provides an optimal model reduction method that is capable to preserve the port-Hamiltonian structure, while this method is restricted to single-input and single-output systems.
	The other nonconvex optimization methods in e.g., \cite{Yu1999approximate,sato2017structure,sato2018pHsys,sato2021book,Jiang2019model} formulate $\mathcal{H}_2$ optimality using coupled Lyapunov equations and develop gradient-based algorithms on Stiefel or Riemannian manifolds to obtain structure-preserving reduced models. These methods, however, require the system state matrix or its symmetric part to be negative definite.

	This paper builds upon the Lyapunov-based optimization framework, but relaxes this restriction by only requiring system stability. We adopt an oblique Petrov-Galerkin projector to construct reduced models that minimize the $\mathcal{H}_2$ error while preserving multiple structural properties  of interest, such as stability, bounded realness, sector boundedness, and dissipativity. These properties are encoded via linear matrix inequalities (LMIs), whose solution defines a structure matrix used to construct a projection.  Then, the model reduction problem is formulated as a nonconvex optimization on the Stiefel manifold. By using the controllability and observability Gramians of the error system, we derive the explicit expression of the gradient of its objective function on the matrix manifold, and also we analyze the properties of this gradient and propose an efficient gradient descent algorithm to compute a locally optimal projection that ensures both low reduction error and the preservation of desirable structural properties.


	The paper is organized as follows. Section~\ref{sec:Preliminaries} introduces the problem setting for structure-preserving model reduction. The main results are presented in Sections~\ref{sec:projection} and~\ref{sec:gradient}, where we formulate the model reduction problem as a nonconvex optimization and develop gradient-based solution methods. Numerical examples are provided in Section~\ref{sec:example}, and concluding remarks are given in Section~\ref{sec:conclusion}.

	\textit{Notations.}  Let $\mathbb{R}$ denote the set of real numbers. The Frobenius norm of a matrix $A \in \mathbb{R}^{n \times m}$ is denoted by $\|A\|^2_F = \langle A, A \rangle = \tr(A^\top A)$. In a symmetric block matrix, the symbol $\star$ stands for the symmetric counterpart.
	We define 
	$
	\mathbb{S}^{n \times r}_* = \{ V \in \mathbb{R}^{n \times r} \mid \rank (V) = r  \}.
	$
	the set of all full‑column‑rank  $n \times r$ matrices. This set is an open subset of $\mathbb{R}^{n\times r}$  and is known as the \textit{noncompact Stiefel manifold}, See \cite{Absil2009optimization} for further details. 
	On the other hand, the compact Stiefel manifold is defined as
	$
	\mathrm{St}(n, r) = \{V \in  \mathbb{S}^{n \times r}_* \mid V^\top V = I_r \}.
	$

	\section{Preliminaries \& Problem Setting}
	\label{sec:Preliminaries}

	\subsection{System Properties and Characterizations}
	\label{sec:problem}
	Consider a linear time-invariant system with the following state-space representation.
	\begin{equation}\label{sys:orig}
		\mathbf{\Sigma}: \
		\begin{cases}
			\dot{x}(t) = A x(t) + B u(t), \\
			y(t) = C x(t) + D u(t),
		\end{cases}
	\end{equation}
	where $x(t) \in \mathbb{R}^n$, $u(t) \in \mathbb{R}^p$, and $y(t) \in \mathbb{R}^{q}$ are the vectors of the states, inputs, and outputs of the system $\mathbf{\Sigma}$. $A \in \mathbb{R}^{n \times n}$, $B \in \mathbb{R}^{n \times p}$, $C \in \mathbb{R}^{q \times n}$,  and $D \in \mathbb{R}^{q \times p}$ are constant matrices.
	Assuming $A$ is Hurwitz, i.e., the system $\mathbf{\Sigma}$ is (asymptotically) stable, the aim of this paper is to develop a general framework of model reduction that is applicable to a wide range of systems with specific structural properties. Particularly, the properties are of interest in this paper, and the corresponding characterizations of these properties in terms of Lyapunov inequalities are summarized in Table~\ref{tab:properties}.

	Here we introduce the notion of  $(Q, S, R)$-dissipativity (see e.g., \cite{willems2007dissipative,xia2016passivity}) and then show how passivity, finite-gain $L_2$ stability, and sector boundedness arise as special cases.
	\begin{definition}[$(Q,S,R)$-dissipativity]
			Let the supply rate be defined by
			$
			w(u,y)=y^\top Qy+2y^\top Su+u^\top Ru,
			$
			where $Q=Q^\top$, $R=R^\top$, and $S$ has compatible dimensions. The system $\Sigma$ in \eqref{sys:orig} is called $(Q,S,R)$-dissipative if there exists a nonnegative storage function $V_s:\mathbb{R}^n\to\mathbb{R}_{\ge 0}$ such that, for every admissible input $u$ and every $\tau\ge 0$,
			\[
			V_s(x(\tau))-V_s(x(0))
			\le
			\int_0^\tau w(u(t),y(t))\,dt.
			\]
		\end{definition}
		
		For the stable LTI systems \eqref{sys:orig} considered in this paper, we use a quadratic storage function $ 
		V_s(x)=x^\top Kx$, with $K=K^\top \succ 0$, and system \eqref{sys:orig} is $(Q, S, R)$-dissipative if and only if there exists a matrix $K > 0$ such that \eqref{eq:K_QSR} holds. If the inequality strictly holds, then the system \eqref{sys:orig} is both asymptotically stable and $(Q, S, R)$-dissipative. The notion of $(Q,S,R)$-dissipativity is general enough to cover several classical system properties of practical interest.
	\begin{enumerate}
		
		\item Let $Q = 0$, $S = I$, $R = 0$. Then, the $(Q, S, R)$-dissipativity reduces to the \textit{passivity} (or \textit{positive realness}), i.e.
		$
		\int_{0}^{\tau} u(t)^\top y(t) \diff{t} \geq 0
		$
		for all admissible $u(t)$ and all $\tau \geq 0$.
		The inequality \eqref{eq:K_QSR} is simplified as \eqref{eq:K_passivity}.
		
		\item Let $Q = -\gamma^{-1} I$, $S = 0$, $R = \gamma I$, then
		the $(Q, S, R)$-dissipativity becomes the \textit{finite-gain $L_2$ stability},
		i.e.
		$
		\int_{0}^{\tau} y(t)^\top y(t) \diff{t} \leq \gamma^2 \int_{0}^{\tau} u(t)^\top u(t) \diff{t}
		$
		for all admissible $u(t)$ and all $\tau \geq 0$, and meanwhile \eqref{eq:K_QSR} is simplified by the Schur complement lemma as \eqref{eq:K_L2gain}. In particular, if $\gamma = 1$, this property is also known as \textit{bounded realness}.
		
		\item Let $Q = - I$, $S = \frac{a+b}{2}I$, $R = -abI$, with $a < b$ and $b>0$, then
		the $(Q, S, R)$-dissipativity implies the system is inside the  cone $[a, b]$, i.e.
		$
		\left(1 + \frac{a}{b}\right) \int_{0}^{\tau} y(t)^\top u(t) \diff{t}  \leq \frac{1}{b}\int_{0}^{\tau} y(t)^\top y(t) \diff{t} +  a \int_{0}^{\tau} u(t)^\top u(t) \diff{t}
		$
		holds for all admissible $u(t)$ and all $\tau \geq 0$. The LMI \eqref{eq:K_QSR} reduces to \eqref{eq:K_sector}.
		
	\end{enumerate}

		
		

	\begin{table}[t]
		\centering
		\begin{threeparttable}
			\caption{Conditions for Key System Properties\tnote{*}}\label{tab:properties}
			\begin{tabular}{m{0.93\linewidth}}
				\toprule
				\rowcolor{gray!10}    Asymptotic stability           \\
				\begin{minipage}{\linewidth}
					\centering
					\begin{equation} \label{eq:K_stability}
						A^\top K + K A \prec 0
					\end{equation}
					\vfil
				\end{minipage}                          \\
				\rowcolor{gray!10}  Passivity (or positive realness) \\
				\begin{minipage}{\linewidth}
					\centering
					\begin{equation} \label{eq:K_passivity}
						\begin{bmatrix}
							A^\top K + K A & KB - C^\top   \\
							\star          & {-D^\top - D}
						\end{bmatrix} \preceq  0,
					\end{equation}
					\vfil
				\end{minipage}                          \\
				\rowcolor{gray!10}  Finite-gain $L_2$ stability      \\
				\begin{minipage}{\linewidth}
					\centering
					\begin{equation} \label{eq:K_L2gain}
						\begin{bmatrix}
							A^\top K + K A & KB         \\
							\star          & - \gamma I
						\end{bmatrix} + \frac{1}{\gamma}
						\begin{bmatrix}  C^\top \\ 	D^\top 	\end{bmatrix}
						\begin{bmatrix} C & D  \end{bmatrix}
						\preceq 0
					\end{equation}
					\vfil
				\end{minipage}                          \\
				\rowcolor{gray!10}  Conic sector boundedness         \\
				\begin{minipage}{\linewidth}
					\centering
					\begin{equation} \label{eq:K_sector}
						\begin{bmatrix}
							A^\top K + K A + C^\top C & KB - \frac{a+b}{2}C^\top + C^\top D              \\
							\star                     & {D^\top D} - \frac{(a+b)}{2} (D^\top + D) + ab I
						\end{bmatrix} \preceq 0
					\end{equation}
					\vfil
				\end{minipage}                          \\
				\rowcolor{gray!10}  $(Q, S, R)$-dissipativity        \\
				\begin{minipage}{\linewidth}
					\centering
					\begin{equation} \label{eq:K_QSR}
						\begin{bmatrix}
							A^\top K + K A - C^\top Q C & KB - C^\top S - C^\top Q D             \\
							\star                       & - D^\top Q D - D^\top S - S^\top D - R
						\end{bmatrix} \preceq 0
					\end{equation}
				\end{minipage}                          \\
				\bottomrule
			\end{tabular}
			\begin{tablenotes}
				\item[*] For each system property, there exists a positive definite matrix $K$ such that the corresponding matrix inequality or equation holds.
			\end{tablenotes}
		\end{threeparttable}
	\end{table}
	The first matrix inequality in Table~\ref{tab:properties} provides necessary and sufficient conditions for asymptotic stability, while
	the other four matrix inequalities for dissipativity-related properties
	are only sufficient conditions for their corresponding properties. Each LMI guarantees the existence of a quadratic storage function of the form $x^\top P x$, implying that the given state-space system $(A, B, C, D)$ satisfies the corresponding property under the assumption of minimality. However, if the system is not controllable or observable, these LMIs are generally no longer necessary conditions.
	
	\begin{remark}
		The notion of $(Q,S,R)$-dissipativity provides a unified description of several important input-output properties through suitable choices of $(Q,S,R)$, including passivity, finite-gain $L_2$ stability, and conic sector boundedness. Furthermore, the corresponding certificate matrix $K$ is used directly to define the structure matrix $X$ in the projection framework of Section~\ref{sec:projection}. Therefore, dissipativity is not only a property to be verified, but also the mechanism that determines the admissible structure-preserving projection.
	\end{remark}
	
	
	\subsection{Problem Formulation}
	
	Consider the model reduction problem for a linear system in \eqref{sys:orig} with specific desired structural properties in Table~\ref{tab:properties}, the objective is to construct a reduced-order model that retains these properties while minimizing the reduction error. Specifically, the research problem is formulated as follows.
	\begin{problem}
		Consider a full-order model $\mathbf{\Sigma}$ in \eqref{sys:orig} that is asymptotically stable (i.e. $A$ is Hurwitz) and satisfies additional properties in Table~\ref{tab:properties}, find a reduced-order system of dimension $r$ ($r < n$):
		\begin{equation}\label{sys:red0}
			\mathbf{\hat{\Sigma}}: \
			\begin{cases}
				\dot{\hat{x}}(t)  = \hat{A} \hat{x}(t) +  \hat{B} u(t), \\
				\hat{y}(t) = \hat{C}   \hat{x}(t) + \hat{D} u(t),
			\end{cases}
		\end{equation}
		where $\hat{A} \in \mathbb{R}^{r \times r}$, $\hat{B}\in \mathbb{R}^{r \times p}$, $\hat{C} \in \mathbb{R}^{q \times r}$, and $\hat{D} \in \mathbb{R}^{q \times p}$ are reduced matrices such that:
		\begin{enumerate}
			\item Asymptotic stability and the specified structural properties of the original system are preserved.
			
			\item The reduced-order model $\hat{\Sigma}$ provides a (locally) optimal approximation of the original model $\Sigma$ with respect to the  $\mathcal{H}_2$ norm of the approximation error.
		\end{enumerate}
	\end{problem}
	
	
	
	To address the above problem,  Section~\ref{sec:projection} introduces an oblique projection method, which enables the construction of a family of reduced-order models that retain the desired properties. Then, in Section~\ref{sec:gradient}, the model reduction problem is reformulated and solved as a nonconvex optimization problem.
	
	\section{Projection-Based Model Reduction With Structure Preservation}
	\label{sec:projection}
	
	Consider a full-order model in \eqref{sys:orig} with any structural property in Table~\ref{tab:properties}. To preserve this property in the reduced-order model \eqref{sys:red0}, the following oblique projection $\Pi$ is adopted:
	\begin{align}\label{eq:pinv}
		\Pi  = V V^\dagger \quad \text{with} \ V^\dagger : = (V^\top X V)^{-1} V^\top X,
	\end{align}
	where $V \in \mathbb{S}^{n \times r}_*$, and $X \in \mathbb{R}^{n \times n}$ is a positive definite matrix referred to as the \textit{structure matrix}, which encodes the desired structural property. The matrix $V^\dagger$ is the \textit{reflexive generalized inverse} of $V$ with respect to $X$, and is well-defined since $X$ is positive definite and $V$ has full column rank. The projection $\Pi$ is an oblique projection onto the range of $V$, by which a reduced-order model is constructed as follows.
	\begin{align}\label{sys:red}
		\mathbf{\hat{\Sigma}}: \
		\begin{cases}
			\dot{\hat{x}}(t)  = \underbrace{\mathclap{V^\dagger A V}}_{\hat{A}} \hat{x}(t) + \underbrace{V^\dagger B}_{\hat{B}} u(t), \\
			\hat{y}(t) = \underbrace{C V}_{\hat C} \hat{x}(t) + D u(t),
		\end{cases}
	\end{align}
	where $\hat{x} \in \mathbb{R}^{r}$ is the reduced state vector.  The following result shows that this particular construction of a reduced-order model can lead to property preservation.
	\begin{theorem}[Property Preservation]
		\label{thm:preservation}
		Consider the original system $\mathbf{\Sigma}$ in \eqref{sys:orig} and its reduced-order model $\mathbf{\hat{\Sigma}}$ in \eqref{sys:red} obtained via the projection \eqref{eq:pinv} with $X \succ 0$. Then $\mathbf{\hat{\Sigma}}$ preserves each of the following properties of $\mathbf{\Sigma}$, provided the corresponding condition on $X$ holds:
		\begin{enumerate}
			
			\item If $\mathbf{\Sigma}$ is asymptotically stable, and $X = K \succ 0$  is a solution of \eqref{eq:K_stability}, then $\mathbf{\hat{\Sigma}}$ is asymptotically stable for any $V \in \mathbb{S}^{n \times r}_*$.
			


			\item If $\mathbf{\Sigma}$ is asymptotically stable and passive, and $X = K \succ 0$ such that \eqref{eq:K_passivity} strictly holds, then $\mathbf{\hat{\Sigma}}$ is asymptotically stable and passive for any $V \in \mathbb{S}^{n \times r}_*$.
			
			\item If $\mathbf{\Sigma}$ is asymptotically stable and finite-gain $L_2$ stable, and $X = K \succ 0$ such that \eqref{eq:K_L2gain} strictly holds with the gain $\gamma >0$, then $\mathbf{\hat{\Sigma}}$ is asymptotically stable and finite-gain $L_2$ stable with the same gain $\gamma$ for any $V \in \mathbb{S}^{n \times r}_*$.
			
			\item If $\mathbf{\Sigma}$ is asymptotically stable and bounded in the cone $[a, b]$, and $X = K \succ 0$ such that  \eqref{eq:K_sector} strictly holds, then $\mathbf{\hat{\Sigma}}$ is asymptotically stable and bounded in the same cone $[a, b]$ for any $V \in \mathbb{S}^{n \times r}_*$.
			
			\item If $\mathbf{\Sigma}$ is asymptotically stable and $(Q, S, R)$-dissipative, and $X = K \succ 0$ such that the matrix inequality \eqref{eq:K_QSR} strictly holds, then $\mathbf{\hat{\Sigma}}$ is asymptotically stable and $(Q, S, R)$-dissipative for any $V \in \mathbb{S}^{n \times r}_*$.
			
			
		\end{enumerate}
	\end{theorem}
	The proof is provided in Appendix~\ref{ap:thm:preservation}. Different structural properties can be preserved in the reduced-order model \eqref{sys:red} by altering the structure matrix $X$.
	Furthermore, to guarantee the asymptotic stability of the reduced-order model, it is always required strict inequalities to be satisfied.
	%
	
	\begin{remark}
		The proposed framework provides a method for the passivity-preserving model reduction problems for port-Hamiltonian systems, which have been extensively studied in the literature, see e.g., \cite{antoulas2005passive,gugercin2012pH,polyuga2010pH,wolf2010passivity,Ionescu2013,hauschild2019pH,salehi2021passivity,sato2018pHsys,moser2020new}.
		In our approach, the structure matrix $X$ in \eqref{eq:pinv} can be chosen as the energy matrix $K$ of the system i.e. $\frac{1}{2}x(t)^\top K x(t)$ represents the total energy (Hamiltonian) of the system. Consequently, the reduced-order model has the reduced energy matrix as $V^\top K V$.
	\end{remark}



	The reduced-order model in \eqref{sys:red} guarantees the preservation of a desirable structural property by selecting a specific structure matrix $X$. Then, we can minimize the approximation error between the original and reduced models by tuning $V$. To this end, we define the error system as
	\begin{align} \label{sys:err}
		\begin{cases}
			\dot{z}(t) = {A}_e z(t) + {B}_eu(t ) \\
			y_e (t) = {C}_e z(t)
		\end{cases}
	\end{align}
	where $z(t) = [x(t)^\top \ \hat{x}(t)^\top]^\top \in \mathbb{R}^{n+r}$, and
	\begin{align*}
		{A}_e = \begin{bmatrix}
			A & 0 \\ 0 & V^\dagger A  V
		\end{bmatrix}, \
		{B}_e = \begin{bmatrix}
			B \\  V^\dagger B
		\end{bmatrix}, \
		{C}_e = \begin{bmatrix}
			C & - CV
		\end{bmatrix}.
	\end{align*}
	Then, due to the preservation of the asymptotic stability, the error system is asymptotically stable, and hence  the square $\mathcal{H}_2$-norm of the approximation error can be computed as
	$
	\| G_e(s) \|_{\mathcal{H}_2}^2 =
	\tr(C_e \calP C_e^\top) = \tr(B_e^\top \calQ B_e),
	$
	where $\calP$ and $\calQ$ are the controllability and observability Gramians of the error system \eqref{sys:err}. Algebraically, $\calP$ and $\calQ$ are solved as the unique solutions of the following Lyapunov equations:
	\begin{subequations}\label{eq_Gramians_err}
		\begin{align}
			\label{W_Lyap}
			A_e \calP+\calP A_e^\top+ B_e B_e^\top & =0, \\
			\label{M_Lyap}
			A_e^\top\calQ+\calQ A_e + C_e^\top C_e & =0.
		\end{align}
	\end{subequations}
	Following a standard technique in e.g., \cite{vanDooren2008H2,beattie2009trust,sato2017structure,Jiang2019model} to analyze error system, the Gramians $\calP$ and $\calQ$ are partitioned according to the block structure of the error system as:
	\begin{equation} \label{eq:blockMW}
		\calP = \begin{bmatrix}
			\calP_{11} & \calP_{12} \\ \calP_{12}^\top & \calP_{22}
		\end{bmatrix}, \quad
		\calQ = \begin{bmatrix}
			\calQ_{11} & \calQ_{12} \\ \calQ_{12}^\top & \calQ_{22}
		\end{bmatrix},
	\end{equation}
	where $\calP_{11}$ and $\calQ_{11}$ are Gramians of the high-order system \eqref{sys:orig}, and $\calP_{22}$ and $\calQ_{22}$ are the Gramians of the reduced-order model \eqref{sys:red}. Here, the submatrices $\calP_{22}$, $\calQ_{22}$, $\calP_{12}$, and $\calQ_{12}$ are characterized by the following Sylvester equations:
	\begin{subequations} \label{eq:Sylv}
		\begin{align}
			\hat{A}	\calP_{22} + \calP_{22}\hat{A}^\top + \hat{B} \hat{B}^\top = 0,
			\\
			\hat{A}^\top	\calQ_{22} + \calQ_{22}\hat{A}  + \hat{C}^\top \hat{C}  = 0,
			\\
			{A}	\calP_{12} + \calP_{12}  \hat{A}^\top +  {B} \hat{B}^\top = 0,
			\label{eq:Sylv_c}
			\\
			{A}^\top	\calQ_{12} + \calQ_{12}\hat{A}  -  {C}^\top \hat{C}  = 0,
			\label{eq:Sylv_d}
		\end{align}
	\end{subequations}
	where $\hat{A}$, $\hat{B}$, and $\hat{C}$ are computed as in \eqref{sys:red}.

	Observe that given a structure matrix $X$, the reduced-order model \eqref{sys:red} is parameterized in the matrix $V$.
	We formulate an optimization problem in terms of minimizing the $\mathcal{H}_2$ norm of the error system $G_e(s)$:
	\begin{align}
		\label{eq:optimization1}
		\min_{V \in \mathbb{S}^{n\times r}_*} \;  J(V)  \; : & =  \tr \left( B_e(V)^\top  \calQ(V)  B_e(V) \right)     \\
		\;\; \text{s.t.:}  \;\;                              & V^\top V \succ 0, \quad  V \in \mathbb{R}^{n \times r},
		\nonumber
		\\ \;\; \quad \quad &  A_e^\top \calQ + \calQ A_e + C_e^\top C_e =0.
		\nonumber
	\end{align}
	This optimization problem is highly {nonconvex} with $\calQ =\calQ(V)$ an implicit function of $V$ due to the unique solution of the Lyapunov equation \eqref{M_Lyap}. In the following sections, we will investigate how to solve this optimization problem.

	\section{Gradient-Based Methods for Optimal Projection}
	\label{sec:gradient}
	This section solves the model reduction problem as a nonconvex optimization problem. To this end, an analytical expression for the gradient of the objective function $J(V)$ is derived, and furthermore, the properties of the gradient flow are analyzed. Then, a gradient descent algorithm is proposed to find an optimal projection on the Stiefel manifold.
	
	\subsection{Gradient Analysis}
	
	To solve the optimization problem \eqref{eq:optimization1}, we analyze the key properties of the objective function $J(V)$ such as smoothness and differentiability, based on which we can apply gradient-based methods for solving \eqref{eq:optimization1}  over the manifold $\mathbb{S}^{n \times r}_*$. In particular, we show in the next lemma that $J$ is a differentiable function and derive an explicit expression for its gradient.
	\begin{lemma}[Gradient on Matrix Manifold]
		\label{lem:gradJ}
		The objective function $J(V)$ in  \eqref{eq:optimization1} is differentiable, and the gradient has the following expression:
		\begin{align}  \label{eq:nablaJ}
			\nabla J(V)
			= & 2 X (I - V V^\dagger)  \left[ \F_A (V) + \F_B (V)^\top \right]   (V^\top X V)^{-1}
			\nonumber                                                                              \\
			& - 2(V^\dagger)^\top  \left[ \F_A (V)^\top + \F_B (V) \right]  (V^\dagger)^\top
			\nonumber
			\\& 	 +  2 A^\top (V^\dagger)^\top \Sigma_{22}^\top   + 2 \F_C (V),
		\end{align}
		where $\Sigma_{22}: = \calP_{12}^\top\calQ_{12} +  \calP_{22}\calQ_{22}$, and
		\begin{subequations}\label{eq:FAFBFC}
			\begin{align}
				\F_A (V) : & = A V    \Sigma_{22}, \label{eq:F_A}                                   \\
				\F_B (V) : & = (\calQ_{12}^\top    +  \calQ_{22} V^\dagger) BB^\top, \label{eq:F_B} \\
				\F_C (V) : & =  C^\top C (V \calP_{22} -  \calP_{12}).
			\end{align}
		\end{subequations}
	\end{lemma}
	%
	The detailed derivation of the expression \eqref{eq:nablaJ} can be found in Appendix~\ref{ap:lem:gradJ}.
		Note that different from methods in from e.g., \cite{vanDooren2008H2,sato2018pHsys}, which compute gradients with respect to the reduced matrices $\hat{A}$, $\hat{B}$, and $\hat{C}$, this paper focuses on optimizing the reduced basis matrix $V$, which is the only optimization variable in the optimization problem \eqref{eq:optimization1}.

	Having the formula for the gradient $\nabla J$ in \eqref{eq:nablaJ}, we further define a gradient flow of $J(V)$ as
	\begin{align} \label{eq:gradflow}
		\frac{\diff}{\diff t}{V}(t)
		= & -2 X (I - V V^\dagger)  \left[ \F_A (V) + \F_B (V)^\top \right]   (V^\top X V)^{-1}
		\nonumber                                                                               \\
		& + 2(V^\dagger)^\top  \left[ \F_A (V)^\top + \F_B (V) \right]  (V^\dagger)^\top
		\nonumber
		\\& -  2 A^\top (V^\dagger)^\top \Sigma_{22}^\top   - 2 \F_C (V),
	\end{align}
	with $\F_A (V)$, $\F_B (V)$, and $\F_C (V)$ defined in \eqref{eq:FAFBFC}, and then we analyze the key properties of the gradient flow.
	\begin{theorem} \label{thm:gradflow}
		Let the initial condition of \eqref{eq:gradflow} be given by $V(0) = V_0 \in \mathbb{S}^{n \times r}_*$. Then, the following statements hold.
		\begin{enumerate}
			\item $V(t)^\top \frac{\diff}{\diff t}{V(t)} = 0$, and  $V(t)^\top V(t)$ is invariant in the gradient flow  \eqref{eq:gradflow}, i.e. $V(t)^\top V(t) = V_0^\top V_0$, $\forall~t \geq 0$.
			\item $V(t) \in \mathbb{S}^{n \times r}_*$,  $\forall~t \geq 0$.
			\item The objective function is nonincreasing along $V(t)$ with
			\begin{align} \label{eq:nonincreasing}
				J(V(t_1))  \geq J(V(t_2)), \ \ \forall~0 \leq t_1 \leq t_2.
			\end{align}
			\item Let $V$ be any local minimizer of problem \eqref{eq:optimization1}, the following implication holds
			\begin{equation} \label{eq:JV0}
				\nabla J(V) = 0   \ \Leftrightarrow \ \nabla J(V) V^\top -  V \nabla J(V)^\top = 0.
			\end{equation}
		\end{enumerate}
	\end{theorem}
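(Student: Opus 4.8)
The plan is to dispatch the four items in order, with the single algebraic identity $V^\top\nabla J(V)=0$, valid at \emph{every} $V\in\mathbb{S}^{n\times r}_*$, doing the heavy lifting for items (1) and (4).

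\emph{Item (1).} First I would establish $V^\top\nabla J(V)=0$. The cleanest route exploits that $J$ is invariant under the right $\mathrm{GL}(r)$-action $V\mapsto VT$: a direct computation gives $(VT)^\dagger=T^{-1}V^\dagger$, hence $\hat A\mapsto T^{-1}\hat A T$, $\hat B\mapsto T^{-1}\hat B$, $\hat C\mapsto\hat C T$, so the reduced transfer matrix $\hat C(sI_r-\hat A)^{-1}\hat B$ — and therefore the error system $G_e(s)$ and $J(V)=\|G_e\|_{\mathcal H_2}^2$ — is unchanged. Differentiating the identity $J\big(V(I_r+tM)\big)\equiv J(V)$ at $t=0$ and using the definition $J'(V)\,\diff V=\langle\nabla J(V),\diff V\rangle=\tr(\nabla J(V)^\top\diff V)$ yields $\tr\!\big(\nabla J(V)^\top V M\big)=0$ for all $M\in\mathbb{R}^{r\times r}$, which forces $V^\top\nabla J(V)=0$. (Alternatively one may substitute \eqref{eq:nablaJ} directly and cancel terms using $V^\top X(I-VV^\dagger)=0$, $V^\dagger V=I_r$ and the four Sylvester identities \eqref{eq:Sylv}, mirroring the proof of Theorem~\ref{thm:gradJ}; this is the more laborious path.) Since the flow \eqref{eq:gradflow} is precisely $\dot V=-\nabla J(V)$, we get $V(t)^\top\dot V(t)=-V(t)^\top\nabla J(V(t))=0$, and transposing also gives $\dot V(t)^\top V(t)=0$; hence $\frac{\diff}{\diff t}\big(V(t)^\top V(t)\big)=\dot V(t)^\top V(t)+V(t)^\top\dot V(t)=0$, so $V(t)^\top V(t)\equiv V_0^\top V_0$.

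\emph{Items (2) and (3).} The right-hand side of \eqref{eq:gradflow} is smooth on the open manifold $\mathbb{S}^{n\times r}_*$: the map $V\mapsto(V^\top XV)^{-1}$ is smooth there, and because $\Gamma<0$ forces $A^\top X+XA<0$ and hence $\hat A^\top(V^\top XV)+(V^\top XV)\hat A=V^\top(A^\top X+XA)V<0$, the reduced matrix $\hat A$ is Hurwitz for every $V\in\mathbb{S}^{n\times r}_*$, so the Sylvester equations \eqref{eq:Sylv} have unique solutions depending smoothly on $V$. By item (1) the maximal solution is confined to the level set $\mathcal L=\{V:V^\top V=V_0^\top V_0\}$, which is closed and, since $\|V\|_F^2=\tr(V_0^\top V_0)$ on $\mathcal L$, bounded — hence compact — and contained in $\mathbb{S}^{n\times r}_*$ because $V_0^\top V_0$ is nonsingular. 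A maximal integral curve of a smooth vector field that stays in a compact subset of the domain exists for all $t\ge 0$, which proves item (2). For item (3), the chain rule gives $\frac{\diff}{\diff t}J(V(t))=\langle\nabla J(V(t)),\dot V(t)\rangle=-\|\nabla J(V(t))\|_F^2\le 0$, so $t\mapsto J(V(t))$ is nonincreasing and \eqref{eq:nonincreasing} follows by integrating over $[t_1,t_2]$.

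\emph{Item (4).} The direction $\Rightarrow$ is trivial. For $\Leftarrow$, assume $\nabla J(V)V^\top=V\nabla J(V)^\top$; right-multiplying by $V$ and using $\nabla J(V)^\top V=(V^\top\nabla J(V))^\top=0$ from item (1) gives $\nabla J(V)\,(V^\top V)=V\nabla J(V)^\top V=0$, and since $V$ has full column rank $V^\top V$ is invertible, whence $\nabla J(V)=0$. (In fact \eqref{eq:JV0} holds at every $V\in\mathbb{S}^{n\times r}_*$; at a local minimizer both sides vanish anyway because $\mathbb{S}^{n\times r}_*$ is open.) The main obstacle is the identity $V^\top\nabla J(V)=0$ behind items (1) and (4): proving it transparently requires spotting the $\mathrm{GL}(r)$-invariance of the objective and verifying the transformation rules for $\hat A,\hat B,\hat C$, whereas a direct verification from \eqref{eq:nablaJ} demands careful bookkeeping across all four Sylvester equations \eqref{eq:Sylv}. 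Once that identity and the compactness of the invariant level set $\mathcal L$ are in hand, the remaining steps are routine.
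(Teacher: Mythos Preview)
Your proposal is correct and largely parallels the paper's argument, but item (1) is handled by a genuinely different device. The paper establishes $V^\top\nabla J(V)=0$ by the direct route you flag as ``more laborious'': it splits $-\nabla J(V)$ into two pieces $\mathcal{J}_1,\mathcal{J}_2$, kills $V^\top\mathcal{J}_1$ via $V^\top X(I-VV^\dagger)=0$, and then expands $V^\top\mathcal{J}_2$ term by term, substituting all four Sylvester identities \eqref{eq:Sylv} to obtain a telescoping cancellation. Your $\mathrm{GL}(r)$-invariance argument bypasses this bookkeeping entirely: once one checks the transformation rules $(VT)^\dagger=T^{-1}V^\dagger$, $\hat A\mapsto T^{-1}\hat A T$, $\hat B\mapsto T^{-1}\hat B$, $\hat C\mapsto\hat C T$, the invariance of $J$ is immediate from the similarity-invariance of the reduced transfer matrix, and the identity $V^\top\nabla J(V)=0$ drops out structurally rather than by cancellation. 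This buys conceptual clarity and robustness (the argument survives any change to the explicit gradient formula), at the cost of relying on the transfer-function characterization of $J$ rather than only on the Lyapunov/Sylvester data used in Theorem~\ref{thm:gradJ}.

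A second, smaller difference: in item (2) you additionally argue global existence of the flow by noting that the invariant level set $\{V:V^\top V=V_0^\top V_0\}$ is compact and contained in $\mathbb{S}^{n\times r}_*$, and that $\hat A$ is Hurwitz for every $V$ there so the vector field is smooth. The paper's proof simply asserts rank preservation from the invariance of $V^\top V$ and does not separately address the maximal interval of existence; your compactness remark closes that gap. Items (3) and (4) are essentially identical to the paper's treatment, including the right-multiplication by $V(V^\top V)^{-1}$ in the $\Leftarrow$ direction of item (4).
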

	
	The proof of the above result is presented in Appendix~\ref{ap:thm:gradflow}. The first two features of the gradient flow $\frac{\diff}{\diff t} V(t)$ reflect that the solution of the ordinary differential equation \eqref{eq:gradflow} always lies on the manifold $\mathbb{S}^{n \times r}_*$ over all the time for
	any given initial condition $V_0$. Moreover, third property implies that the solution will converge to a critical point of $J$ on $\mathbb{S}^{n \times r}_*$ as the objective function $J(V)$ is always non-negative. It is guaranteed that if the objective function $J(V)$ has only isolated minimum points, the solution $V(t)$ should converge to one of them. The last two properties of the gradient are useful for the theoretical proofs of the approaches developed in the next section.
	
	With the analytical expression of the gradient in \eqref{eq:nablaJ},
	an iterative algorithm based on gradient descent can be employed to solve the optimization problem \eqref{eq:optimization1} in a discrete manner.
	
	\subsection{Gradient-Based Algorithms}
	
	Let $X$ be chosen based on the structural  property of the original system \eqref{sys:orig} to be preserved and $V_0 \in \mathbb{S}^{n \times r}_*$ be an initial solution. A direct discretization of the gradient flow is the Euclidean gradient iteration:
	\begin{align} \label{eq:simple_iter}
		V_{k+1}  = V_k - \alpha_k \nabla J(V_k),
	\end{align}
	with stepsize $\alpha_k>0$. Since $V_k^\top \nabla J(V_k)=0$ by Theorem~\ref{thm:gradflow}, every iterate generated by \eqref{eq:simple_iter} remains full column rank, i.e.,  $V_k \in \mathbb{S}^{n \times r}_*$. This scheme is easy to implement and often performs well in practice, but its theoretical convergence analysis is difficult as the search space $\mathbb{S}^{n\times r}_*$ is noncompact.
	
	To obtain a iterative scheme with convergence guarantee, we restrict the search to the compact manifold
	\begin{equation} \label{eq:Stiefel_Comp}
		\mathcal{M} = \{V \in  \mathbb{S}^{n \times r}_* \mid V^\top V = M, \quad M= M^\top \succ 0\},
	\end{equation}
	and introduce a curvilinear-search update of the form
	\begin{equation} \label{eq:Vt_new}
		V_{k+1} = V_k - \tau_k \left(I+\frac{\tau_k}{2} V_k \nabla J(V_k)^\top \right) \nabla J(V_k)   \mathcal{U}_k^{-1}M,
	\end{equation}
	where
	$\mathcal{U}_k: =  I_r +  \frac{\tau_k^2}{4} M \nabla J(V_k)^\top \nabla J(V_k)$, and $\tau_k > 0$ is the stepsize. 
	For the iterate $V_k$, define the associated search curve
	\begin{equation}
		\mathcal{V}_k(\tau)
		=
		V_k
		-
		\tau
		\left(I+\frac{\tau}{2}V_k\nabla J(V_k)^\top\right)
		\nabla J(V_k)\mathcal{U}(\tau)^{-1}M,
		\label{eq:Vtau}
	\end{equation}
	where $
	\mathcal{U}(\tau)
	=
	I_r+\frac{\tau^2}{4}M\nabla J(V_k)^\top \nabla J(V_k).
	$
	Then \eqref{eq:Vt_new} is simply obtained by setting
	$
	V_{k+1}=\mathcal{V}_k(\tau_k).
	$
	The next theorem gives the main properties of the curvilinear-search scheme.

	\begin{theorem}
		\label{thm:alg2}
		Let $V_0 \in \mathbb{S}^{n \times r}_*$ be the initial condition and $\{V_k\}$ be the generated sequence from \eqref{eq:Vt_new}. Denote $M: = V_0^\top V_0$. Then, there are stepsizes $\tau_k$ such that the sequence $\{V_k\}$ has the following properties.
		\begin{enumerate}
			\item $V_k \in \mathbb{S}^{n \times r}_*$, and $V_k^\top V_k = M$ for each iteration $k$.
			\item the objective function $J(V)$ is non-increasing along the iterations as
			\begin{align} \label{eq:convergence}
				J(V_{k+1}) & \leq J(V_k) -  \tau_k \| V_k \nabla J(V_k)^\top \|_F^2                         \\
				& \leq J(V_k) - \tau_k \sigma_{\mathrm{min}}(M) \|\nabla J(V_k) \|_F^2 \nonumber
			\end{align}
			for all $k \geq 0$, where $\sigma_{\mathrm{m}}(M)$ is the smallest singular value of the constant matrix $M$.
			
			\item The sequence $\{J(V_k)\}$ is monotonically nonincreasing and convergent. Moreover, every accumulation point $V_\star$ of $\{V_k\}$ is a critical point of $J$ on $\mathcal{M}$, which is equivalent to $
				\nabla J(V_\star)=0 
				$ as $\nabla J(V)^\top V=0$ on $\mathcal{M}$. 
			
			
		\end{enumerate}
	\end{theorem}
	The detailed proof is given in Appendix~\ref{ap:thm:alg2}. In particular, \eqref{eq:Vt_new} preserves the manifold $\mathcal{M}$. When $M=I_r$, this manifold reduces to the compact Stiefel manifold $\mathrm{St}(n,r)$.
	
	The stepsize $\tau_k$ in \eqref{eq:Vt_new} may be chosen in different ways. A simple option is to fix it a priori. Alternatively, a more robust strategy is to select the stepsize $\tau_k$ according to the  Armijo backtracking rule \cite{sun2006optimization,Absil2009optimization}, namely
	\begin{align}
		J(\mathcal{V}_k(\beta^{m_k}\bar{\tau}))
		\le
		J(\mathcal{V}_k(0))
		+
		\sigma\beta^{m_k}\bar{\tau}\,
		J_\tau'(\mathcal{V}_k(0)),
		\label{eq:Armijo2}
	\end{align}
	where $\beta,\sigma\in(0,1)$, $\bar{\tau}>0$, and $m_k$ is the smallest nonnegative integer such that \eqref{eq:Armijo2} holds. The stepsize is then chosen as
	$
	\tau_k=\beta^{m_k}\bar{\tau}.
	$
	In practice, each candidate stepsize requires an evaluation of the objective along the curve, and therefore an additional solution of the Sylvester equations in \eqref{eq:Sylv}.
	Besides the Armijo rule, we may also use a simple adaptive multiplicative strategy in which the stepsize is increased when the objective decreases and reduced otherwise. This leads to the implementation in Algorithm~\ref{alg2}.

\begin{algorithm}[t]
	\small
	\caption{Iterative Algorithm Based on Curvilinear Search with Adaptive Stepsize}
	\label{alg2}
	\begin{algorithmic}[1]
		\State Let $X$ be chosen based on the structural property of the original system \eqref{sys:orig} and $V_0 \in \mathbb{S}^{n \times r}_*$ be an initial solution.
		\State Set $M = V_0^\top V_0$. Choose an initial stepsize $\tau_0>0$, factors  $\eta_+>1$ and $\eta_- \in (0,1)$, maximum number of iterations $N_\mathrm{max}$ and tolerance $\epsilon>0$.
		\State $k \gets 0$.
		\Repeat
		\State Solve the Sylvester equations in \eqref{eq:Sylv} for $V_k$.
		\State Compute  $J(V_k)$ and the gradient $\nabla J(V_k)$ using \eqref{eq:nablaJ}.
		\State Set $\tau \gets \tau_k$.
		\Repeat
		\State  
		$
		\widetilde V
		\gets
		V_k
		-
		\tau
		\left(I+\frac{\tau}{2}V_k \nabla J(V_k)^\top\right)
		\nabla J(V_k)\,\mathcal{U}_k^{-1}M.
		$
		\State Solve \eqref{eq:Sylv} for $\widetilde V$ and evaluate $J(\widetilde V)$.
		\If{$J(\widetilde V) < J(V_k)$}
		\State Accept the step: $V_{k+1} \gets \widetilde V$.
		\State Update the next stepsize: $\tau_{k+1} \gets \eta_+ \tau$.
		\State \textbf{break}
		\Else \State Reject the step and reduce the stepsize: $\tau \gets \eta_- \tau$.
		\EndIf
		\Until{the step is accepted}
		\State $k \gets k+1$.
		\Until{$\|\nabla J(V_k)\|_F^2 \le \epsilon$ or $k> N_\mathrm{max}$}
	\end{algorithmic}
\end{algorithm}

		\textit{Initialization}. In principle, any full-column-rank matrix $V_0 \in \mathbb{R}^{n\times r}$ can be used as the initial point of Algorithm~\ref{alg2}. This includes random initializations as well as reduced bases obtained from balanced truncation, Krylov subspace, and clustering-based methods \cite{cheng2021review}. 	
		Since the objective function is basis-invariant, the optimization depends only on the trial subspace spanned by the columns of $V$, rather than on a particular basis representation of that subspace. Therefore, using the output of an existing reduction method as $V_0$ provides a natural and effective initialization strategy. In this way, the proposed optimization approach can be interpreted as a post-processing refinement step that improves the approximation quality of reduced-order models obtained by classical methods.

\begin{remark}
	In the proof of Theorem~\ref{thm:alg2}, the update \eqref{eq:Vt_new} is shown to be a low-dimensional realization of a Cayley-transform step on the invariant manifold $ \mathcal{M}$
	using the identity $V^\top \nabla J(V)=0$. 
	The practical merit of \eqref{eq:Vt_new} is that, although it is derived from the Cayley transform, it avoids the inversion of an $n\times n$ matrix and instead requires only the inverse of the $r\times r$ matrix $\mathcal{U}_k$. Since $r \ll n$ in model reduction, this gives an efficient implementation of the compact-manifold update while preserving the constraint $V_k^\top V_k=M$.
\end{remark}


\begin{remark}
	\label{rem:Riemannian}
	The optimization problem considered in this paper is basis-invariant: replacing $V$ by $VT$ with any invertible matrix $T \in \mathbb{R}^{r\times r}$ yields an equivalent reduced-order model up to a similarity transformation, and hence the same $\mathcal{H}_2$ objective value. Therefore, the problem is intrinsically subspace-based and admits equivalent compact-manifold formulations.
	In particular, Algorithm~\ref{alg2} can be viewed as a Riemannian gradient method on the compact manifold $\mathcal{M}$, 
	which includes $\mathrm{St}(n,r)$ as the special case $M=I_r$. Following \cite{Absil2009optimization}, the Riemannian gradient is obtained by projecting the Euclidean gradient onto the tangent space. Owing to the identity
	$
	\nabla J(V)^\top V = 0
	$
	, this projection leaves the gradient unchanged in the present setting. Hence, the explicit gradient \eqref{eq:nablaJ} is directly compatible with compact-manifold optimization, and the update \eqref{eq:Vt_new} can be interpreted  as a retraction-based step on $\mathcal{M}$.
\end{remark}

\section{Numerical Example}
\label{sec:example}
	The proposed optimal projection-based model reduction method (abbreviated as OPMR) is illustrated on a benchmark port-Hamiltonian RLC ladder network adapted from \cite{polyuga2010pH}. To assess the effectiveness of the method, we consider a network with $n=500$ states and $m=6$ inputs/outputs, and the ladder consists of $250$ LC pairs, where all capacitors, inductors and resistive parameters are chosen as $
	C_i = 1, L_i = 1$, $
	R_i = 0.1, $ $i=1,\ldots,250$, with a terminal load resistance
	$
	R_{\mathrm{load}} = 0.5.$
	The resulting system is written in port-Hamiltonian form, and the structure matrix in \eqref{eq:pinv} is chosen as the Hamiltonian matrix of the system. This choice satisfies the passivity identities in \eqref{eq:K_passivity}, and therefore ensures that the reduced-order model preserves passivity by Theorem~\ref{thm:preservation}.

	The proposed method is compared with two projection-based reduction methods for port-Hamiltonian systems: balanced truncation for port-Hamiltonian systems (BT-PH) \cite{kawano2018structure} and the iterative rational interpolation method IRKA-PH \cite{gugercin2012pH}. For OPMR, two stepsize strategies are considered: an adaptive multiplicative update as in Algorithm~\ref{alg2}, and an Armijo backtracking rule in \eqref{eq:Armijo2}. In all experiments, the maximum number of OPMR iterations is set to $200$. For each reduced order, the same IRKA-generated initial subspace is used to initialize both OPMR variants.  
	The simulations are performed on an Intel Core i7-7500U (2.70GHz) CPU. 
	Fig.~\ref{fig:errcomp} reports the relative $\mathcal{H}_2$ errors together with the average computation time per OPMR iteration when different reduced orders $r$ are applied. For all tested $r$, both OPMR variants achieve smaller approximation errors than BT-PH and IRKA-PH. The improvement is particularly clear for moderate reduced orders. For example, when $r=10$, the relative $\mathcal{H}_2$ error of OPMR with the adaptive stepsize is $54.15\%$ lower than that of BT-PH and $53.81\%$ lower than that of IRKA-PH. When $r = 12$, OPMR improves the accuracy of BT-PH and IRKA-PH by 41.46\% and 
	40.58\%, respectively.
	Furthermore, the Armijo rule offers marginally better performance than the adaptive update in OPMR, but the difference remains minor across all tested cases. 
	In terms of computational load, the timing bars in Fig.~\ref{fig:errcomp} show that the Armijo rule requires a higher average time per iteration as it involves repeated objective evaluations.
	
	To illustrate the optimization landscape of the nonconvex objective $J(V)$, Fig.~\ref{fig:converg} presents the convergence curves for different initializations at $r = 8$.  The left panel shows the results for $10$ random full-rank initializations, and the right panel compares three structured initializations, namely clustering-based \cite{cheng2021review}, IRKA-based \cite{gugercin2012pH}, and BT-based  \cite{kawano2018structure} initializations. Although different initializations lead to different convergence trajectories, final objective values are rather close. Particularly, the random initializations show a large spread in the initial errors but rapidly converge after about 100 iterations. In contrast, the structured initializations start from much smaller initial errors and converge faster to slightly better final values. Among them, IRKA- and BT-based initializations show similar convergence behavior and outperform the clustering-based approach by a small margin.

	\begin{figure}[t]
		\centering
		\includegraphics[width=0.45\textwidth]{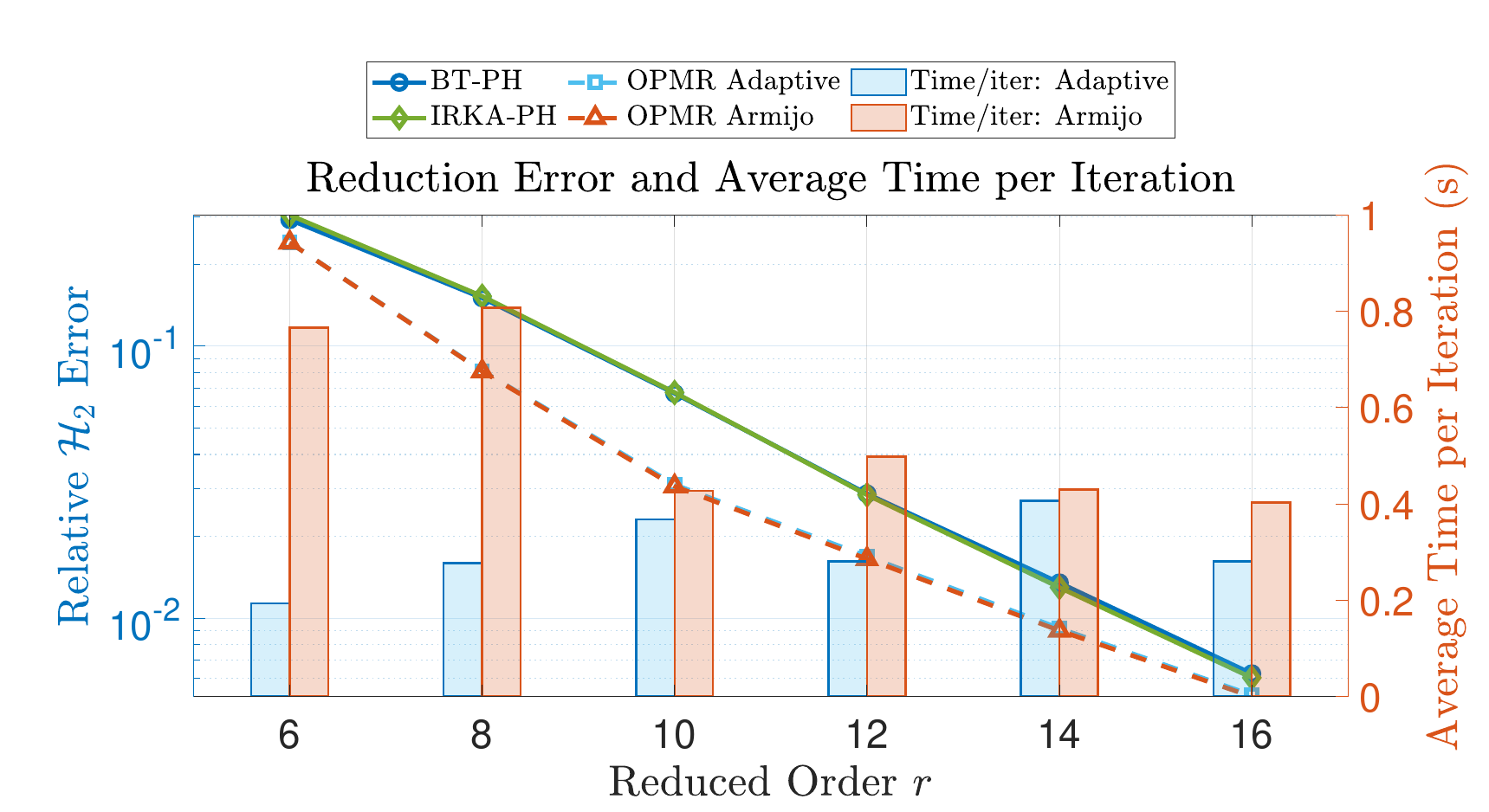}
		\caption{Relative $\mathcal{H}_2$ errors and average computation time per iteration as the reduced order $r$ varies.}
		\label{fig:errcomp}
	\end{figure}
	\begin{figure}[t]
		\centering
		\includegraphics[width=0.48\textwidth]{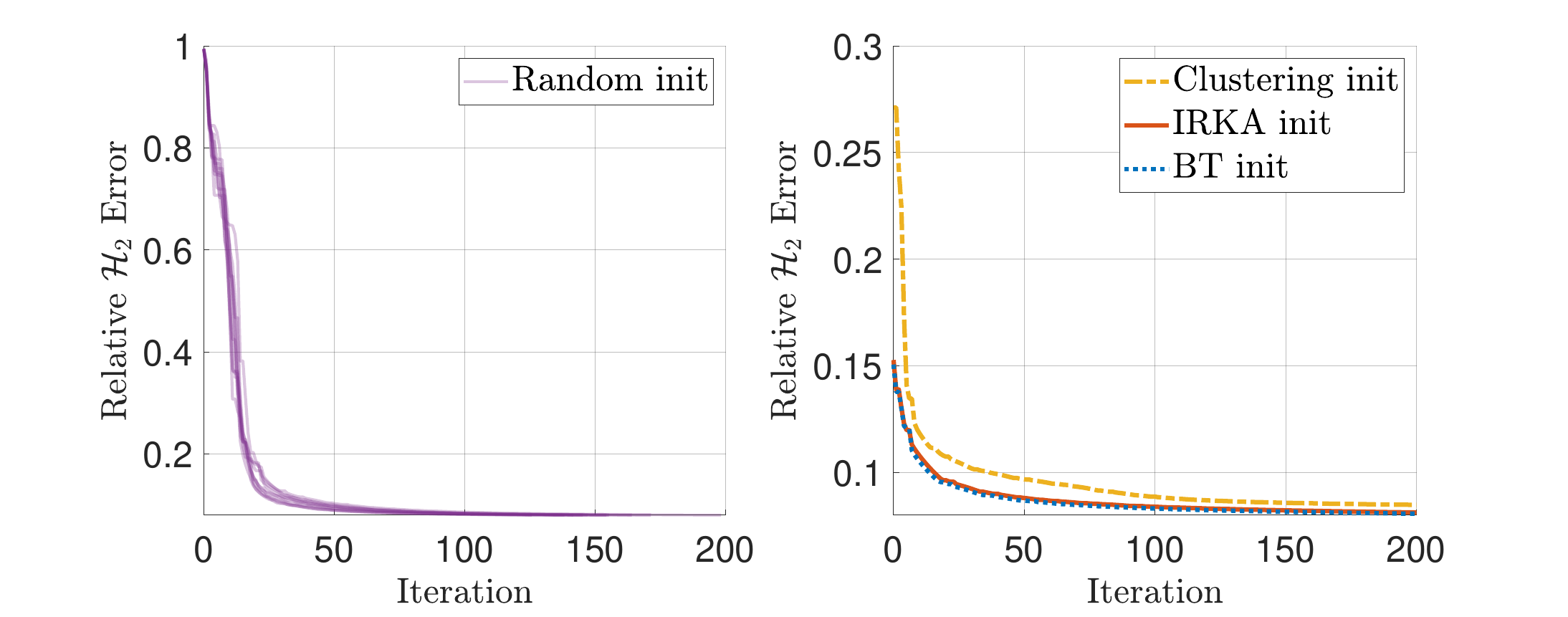}
		\caption{Convergence behavior of OPMR for different initializations. Left: 10 random full-rank initializations. Right: clustering-, IRKA-, and BT-based initializations.}
		\label{fig:converg}
	\end{figure}

\section{Conclusions}
\label{sec:conclusion}
In this paper, we have presented a novel model reduction method that incorporates nonconvex optimization with the Petrov-Galerkin projection for the model reduction of linear systems with different properties, including stability, bounded realness, sector boundedness, and dissipativity.
An optimization problem has been
The model reduction is formulated as an optimization problem on the Stiefel manifold, aiming to minimize the approximation error measured in the $\mathcal{H}_2$ norm. We derived the gradient of the objective function and developed a gradient descent algorithm that converges to a locally optimal solution. The effectiveness of the proposed method was demonstrated through a benchmark numerical example, which shows that it is applicable to passivity-preserving model reduction problems.
The proposed framework is developed for continuous-time systems, but the same projection-and-optimization idea can be extended to discrete-time systems by replacing the continuous-time Lyapunov/Sylvester equations and LMIs with the corresponding discrete-time formulations. A detailed treatment of this extension is left for future work.

\section*{Appendix}
\renewcommand{\thesubsection}{\Alph{subsection}}

\subsection{Proof of Theorem \ref{thm:preservation}}
\label{ap:thm:preservation}
Note that the sufficient conditions in Table~\ref{tab:properties} of asymptotic stability, passivity, finite $L_2$ gain stability, conic sector boundedness can be regarded as special cases of the $(Q, S, R)$-dissipativity condition in \eqref{eq:K_QSR} with particular choices of $Q$, $S$, and $R$ matrices. Therefore, the proof can be established by showing that the reduced-order model $\mathbf{\hat{\Sigma}}$ in \eqref{sys:red} is asymptotically stable and $(Q, S, R)$-dissipative with the same $Q$, $S$, and $R$ matrices as the original system $\mathbf{\Sigma}$ in \eqref{sys:orig}.
To this end, we let $\hat{K} = V^\top X V  = V^\top K V$. Then, the following equations hold.
\begin{align*}
	\hat{A}^\top \hat{K} + \hat{K} \hat{A}   - \hat{C}^\top Q \hat{C}
	=                                                     & V^\top (A^\top K + K A - C^\top Q C) V,
	\\
	\hat{K} \hat{B} - \hat{C}^\top S - \hat{C}^\top Q D = & V^\top (KB - C^\top S -C^\top Q D).
\end{align*}
Let $\Gamma$ be the matrix defined in \eqref{eq:K_QSR}, which  is negative definite due to the asymptotic stability and $(Q, S, R)$-dissipativity of the original model $\mathbf{\Sigma}$. Then, for the reduced-order model \eqref{sys:red}, we have
\begin{align*}
	& \begin{bmatrix}
		\hat{A}^\top \hat{K} + \hat{K} \hat{A} - \hat{C}^\top Q \hat{C} & K\hat{B} - \hat{C}^\top S - \hat{C}^\top Q D & \\
		\star                                                           & - D^\top Q D - D^\top S - S^\top D - R
	\end{bmatrix} \\
	& =  \begin{bmatrix}
		V^\top & \\ & I
	\end{bmatrix}
	\Gamma
	\begin{bmatrix}
		V & \\ & I
	\end{bmatrix}  \prec 0
\end{align*}
Therefore, the same $Q$, $S$, and $R$ matrices exist for the reduced-order model $\mathbf{\hat{\Sigma}}$ to satisfy the strict dissipativity inequality. As a result, $\mathbf{\hat{\Sigma}}$ remains asymptotically stable and $(Q, S, R)$-dissipative.

\subsection{Proof of Lemma \ref{lem:gradJ}}
\label{ap:lem:gradJ}
Observe that
$
V^\top X = (V^\top X V) V^\dagger.
$
We take the differential on both sides, leading to
$
\diff V^\top X =  \diff((V^\top X V) V^\dagger)
=                 (\diff V^\top X V + V^\top X \diff V) V^\dagger + (V^\top X V)  \diff (V^\dagger),
$
from which, the differential $ \diff (V^\dagger)$ is solved as
\begin{equation}\label{eq:differential0}
	\diff(V^\dagger) = (V^\top X V)^{-1}  \diff V^\top X (I - V V^\dagger)  - V^\dagger \diff V V^\dagger.
\end{equation}

To compute the gradient $\nabla J(V)$,  we write the derivative  $J'(V) \diff{V}$ for some $\diff{V} \in  \mathbb R^{n \times r }$ in a gradient form in terms of trace as $
J'(V)  \diff{V}   = \tr \left( \nabla J(V)^\top \diff{V}  \right).
$
Moreover, if we define the objective function $J(V): = F(\hat{A}, \hat{B}, \hat{C})$ with
$
\hat{A} = V^\dagger A V, \ \hat{B} = V^\dagger B, \ \text{and} \ \hat{C} = C V,
$
we have
\begin{align}
	\label{eq:dV-dAdBdC}
	& \tr \left( \nabla J(V)^\top \diff{V}  \right)   \nonumber                                   \\
	= & \tr \left( \nabla F(\hat{A})^\top \diff{\hat{A}} +  \nabla F(\hat{B})^\top \diff{\hat{B}} +
	\nabla F(\hat{C})^\top \diff{\hat{C}}\right),
\end{align}
where $\nabla F(\hat{A})$, $\nabla F(\hat{B})$, and $\nabla F(\hat{C})$ are gradients of the objective function $F(\hat{A}, \hat{B}, \hat{C})$. Following \cite{vanDooren2008H2}, the gradients can be determined as
\begin{subequations}
	\begin{align}
		\nabla F(\hat{A}) & = 2(   \calQ_{12}^\top\calP_{12}  +  \calQ_{22}\calP_{22}) = 2 \Sigma_{22}^\top, \\
		\nabla F(\hat{B}) & = 2 (\calQ_{12}^\top + \calQ_{22} V^\dagger) B,                                  \\
		\nabla F(\hat{C}) & = 2 C (V \calP_{22} -  \calP_{12}).
	\end{align}
\end{subequations}
As a result, we derive the following equations by using \eqref{eq:differential0}:
\begin{subequations}
	\begin{align}
		\label{eq:trace_diffMB1}
		& \tr (\nabla F (\hat{A})^\top \diff \hat{A}(V))                                                             \\
		& =
		2\tr \left[ (V^\top X V)^{-1} \F_A(V)^\top (I - (V V^\dagger)^\top) X  \diff V   \right. \nonumber            \\
		& \quad \left. - V^\dagger  \F_A(V) V^\dagger \diff V   + \Sigma_{22} V^\dagger A \diff V  \right],\nonumber
		\\
		\label{trace_MdiffB}
		& \tr \left(\nabla F (\hat{B})^\top \diff \hat{B}(V) \right)
		\\
		& =  2\tr \left[ (V^\top X V)^{-1} \F_B(V)(I - (VV^\dagger)^\top) X \diff V  \right. \nonumber
		\\ & \left. \quad - V^\dagger  \F_B(V)^\top  V^\dagger  \diff V
		\right],
		\nonumber
		\\
		\label{eq:trace_diffMB2}
		& \tr(\nabla F (\hat{C})^\top \diff \hat{C}(V))
		= 2 \tr(\F_C(V)^\top \diff{V}),
	\end{align}
\end{subequations}
where $\F_A(V)$, $\F_B(V)$, and $\F_C(V)$ defined in \eqref{eq:FAFBFC}.
Finally, we substitute  the three equations in \eqref{trace_MdiffB}, \eqref{eq:trace_diffMB1}, and \eqref{eq:trace_diffMB2} to \eqref{eq:dV-dAdBdC} to obtain the gradient in \eqref{eq:nablaJ}.

\subsection{Proof of Theorem \ref{thm:gradflow}}
\label{ap:thm:gradflow}
We first prove $V^\top \frac{\diff}{\diff t}{V} = 0$ for any $V \in \mathbb{S}^{n \times r}_*$. We rewrite \eqref{eq:gradflow} as
$\frac{\diff}{\diff t}{V} = 2(\mathcal{J}_1 + \mathcal{J}_2)$, where
\begin{align*}
	\mathcal{J}_1
	=               & - X (I - V V^\dagger)  \left[ \F_A (V) + \F_B (V)^\top \right]   (V^\top X V)^{-1},
	\\
	\mathcal{J}_2 = & (V^\dagger)^\top  \left[ \F_A (V)^\top + \F_B (V) \right]  (V^\dagger)^\top
	\nonumber
	\\& -   A^\top (V^\dagger)^\top \Sigma_{22}^\top   - \F_C (V),
\end{align*}
Note that
$
V^\top X (I - V V^\dagger) = V^\top X - V^\top X V V^\dagger = 0,
$
which leads to
\begin{equation} \label{eq:VJ1}
	V^\top \mathcal{J}_1 = 0.
\end{equation}
Furthermore, we have
\begin{align} \label{eq:VJ2}
	V^\top \mathcal{J}_2 = & \Sigma_{22}^\top \hat{A}  + \calQ_{12}^\top B \hat{B}^\top +  \calQ_{22} \hat{B} \hat{B}^\top
	\nonumber                                                                                                              \\
	& - \hat{A}^\top  \Sigma_{22} - \hat{C}^\top \hat{C} \calP_{22} + \hat{C}^\top C \calP_{12}
\end{align}
and it is obtained from the Sylvester equations in \eqref{eq:Sylv} that
\begin{align*}
	- \calQ_{12}^\top B \hat{B}^\top & = \calQ_{12}^\top A \calP_{12} + \calQ_{12}^\top \calP_{12} \hat{A}^\top,
	\\
	-  \calQ_{22} \hat{B} \hat{B}^\top
	& = \calQ_{22} \hat{A} \calP_{22} + \calQ_{22} \calP_{22} \hat{A}^\top,
	\\
	\hat{C}^\top \hat{C} \calP_{22}
	& = - \hat{A}^\top \calQ_{22} \calP_{22} - \calQ_{22} \hat{A} \calP_{22},
	\\
	- \hat{C}^\top C \calP_{12}
	& = - \calQ_{12}^\top A \calP_{12} - \hat{A}^\top \calQ_{12}^\top \calP_{12}.
\end{align*}
Substituting the above equations into \eqref{eq:VJ2} then gives
\begin{align} \label{eq:VJ2-2}
	V^\top \mathcal{J}_2 = & \Sigma_{22}^\top \hat{A} - \calQ_{12}^\top \calP_{12} \hat{A}^\top - \calQ_{22} \calP_{22} \hat{A}^\top
	\nonumber                                                                                                                          \\
	& - \hat{A}^\top \Sigma_{22}  + \hat{A}^\top \calQ_{22}\calP_{22} + \hat{A}^\top \calQ_{12}^\top \calP_{12}
	= 0.
\end{align}
Therefore, it follows from \eqref{eq:VJ1} and \eqref{eq:VJ2-2} that
\begin{equation} \label{eq:VdV0}
	V^\top \frac{\diff}{\diff t}{V} = 2 V^\top \mathcal{J}_1 + 2 V^\top \mathcal{J}_2 = 0,
\end{equation}
and
$\frac{\diff}{\diff t} (V(t)^\top V(t)) = \frac{\diff}{\diff t}{V}(t)^\top V(t) + V(t)^\top \frac{\diff}{\diff t}{V}(t) = 0,$ which proves the first statement.

The second statement can be shown with the invariance of $V(t)^\top V(t)$ in the gradient flow.
It is clear that
$
V(t)^\top V(t) = V(0)^\top V(0) = V_0^\top V_0,
$
which gives
$
\rank (V(t)) =
\rank (V_0).
$
For arbitrary initial condition $V_0 \in \mathbb{S}^{n \times r}_*$, the matrix $V(t)$ at any $t \geq 0$ stays in $\mathbb{S}^{n \times r}_*$.

Next, we prove the third statement by writing the derivative of $J(V)$ as follows.
\begin{equation*}
	\frac{\diff}{\diff t}{J} (V(t)) = \langle \nabla J(V), \frac{\diff}{\diff t}{V} \rangle =  -   \| \nabla J(V) \|^2_F \leq 0.
\end{equation*}
Therefore, $J(V)$ is nonincreasing, i.e. \eqref{eq:nonincreasing} holds.

Denote $W: = \nabla J(V) V^\top -  V \nabla J(V)^\top$, and it is clear that $W = 0$ if $\nabla J(V) = 0$. Moreover, we have $\nabla J(V)^\top V = 0$, $\forall V \in \mathbb{S}^{n \times r}_*$ due to \eqref{eq:VdV0}, then
$ W V (V^\top V)^{-1} = \nabla J(V),
$
and thus $J(V) = 0$ if $W = 0$. The implication \eqref{eq:JV0} is obtained.

\subsection{Proof of Theorem \ref{thm:alg2}}
\label{ap:thm:alg2}

Consider the Cayley transform that computes a parametric curve at the point $V \in \mathcal{M}$, with the following closed form:
\begin{equation} \label{eq:Cayley}
	\mathcal{V}  (\tau) = \left(I + \frac{\tau}{2} W\right)^{-1} \left(I - \frac{\tau}{2} W\right) V ,
\end{equation}
where $\tau$ is
a parameter that represents the length on the curve, and  $W$ is a skew-symmetric matrix given as
\begin{equation} \label{eq:W}
	W  = \nabla J(V) V^\top -  V \nabla J(V)^\top.
\end{equation}
Note that $\mathcal{V}  (\tau)$ is a smooth function of $\tau$ with $\mathcal{V}  (0) = V$, and moreover,
$
\mathcal{V}(\tau)^\top \mathcal{V} (\tau) = V^\top V,
$
for all $\tau \in \mathbb{R}$. 		Observe that the matrix $W$ in \eqref{eq:W} can be expressed as the outer product of two low-rank matrices as $W = W_L W_R^\top$ with
$
W_L = [
\nabla J (V), \ -V
], \ \text{and} \ W_R = [
V, \ \nabla J (V)].
$
Then, it follows from the matrix inversion lemma that
$
	\left(I + \frac{\tau}{2} W\right)^{-1} = I - \frac{\tau}{2} W_L \left(I + \frac{\tau}{2} W_R^\top W_L \right)^{-1} W_R^\top.
	$
Therefore, we have
\begin{align} \label{eq:Vt}
	\mathcal{V}(\tau) & = \left(I + \frac{\tau}{2} W\right)^{-1} \left(I - \frac{\tau}{2} W\right) V
	\nonumber                                                                                                       \\
	& = \left[I - \tau W_L \left(I + \frac{\tau}{2} W_R^\top W_L \right)^{-1} W_R^\top \right] V.
\end{align}
Denote $Z: = W_L (I + \frac{\tau}{2} W_R^\top W_L )^{-1} W_R^\top  V$, which can be rewritten as
\begin{align*}
	Z & =
	\begin{bmatrix}
		\nabla J & - V
	\end{bmatrix}
	\begin{bmatrix}
		I                                     & \frac{\tau}{2} V^\top V \\
		\frac{\tau}{2} \nabla J^\top \nabla J & I
	\end{bmatrix}^{-1}
	\begin{bmatrix}
		V^\top V \\ 0
	\end{bmatrix}
	\\
	& = \left(I + \frac{\tau}{2} V \nabla J^\top \right) \nabla J \mathcal{U}(\tau)^{-1} V^\top V,
\end{align*}
which is substituted into \eqref{eq:Vt} and yields \eqref{eq:Vtau}.
	Thus, we obtain	$V_{k+1}^\top V_{k+1} = V_{k}^\top V_{k} = V_0^\top V_0 = M$ as in the first statement.

	We next establish the descent property. Let $
	\phi_k(\tau):=J(\mathcal{V}_k(\tau)).
	$
	Since $\mathcal{V}_k(0)=V_k$, differentiating \eqref{eq:Cayley} at $\tau=0$ gives
	$
	\mathcal{V}_k'(0)
	=
	-W_kV_k,
	$
	where
	$
	W_k=\nabla J(V_k)V_k^\top - V_k\nabla J(V_k)^\top.
	$
	Using the identity $V_k^\top \nabla J(V_k)=0$ from Theorem~\ref{thm:gradflow}, we obtain
	\begin{align*}
		\mathcal{V}_k'(0)
		&=
		-(\nabla J(V_k)V_k^\top - V_k\nabla J(V_k)^\top)V_k 
		=
		-\nabla J(V_k)M.
	\end{align*}
	Hence,
	\begin{align}
		\phi_k'(0)
		=
		\left\langle \nabla J(V_k),\mathcal{V}_k'(0)\right\rangle
		&=
		-\tr\!\big(\nabla J(V_k)^\top \nabla J(V_k)M\big)
		\nonumber\\
		&\le
		-\lambda_{\min}(M)\|\nabla J(V_k)\|_F^2.
		\label{eq:J_V0}
	\end{align}
	Therefore, $\mathcal{V}_k'(0)$ is a strict descent direction whenever $\nabla J(V_k)\neq 0$.
	Since $J$ is continuously differentiable on $\mathbb{S}^{n\times r}_*$ and $\mathcal{V}_k(\tau)$ is smooth in $\tau$, the function $\phi_k$ is continuously differentiable. Suppose that we choose the Armijo condition \eqref{eq:Armijo2} to select each stepsize $\tau_k$. Then, using \eqref{eq:J_V0}, we obtain
	\begin{align*}
		J(V_{k+1})
		=
		\phi_k(\tau_k)
		& \le
		\phi_k(0)+\sigma \tau_k \phi_k'(0)
		\\ & =
		J(V_k)-\sigma \tau_k \tr\!\big(\nabla J(V_k)^\top \nabla J(V_k)M\big),
	\end{align*}
	which proves \eqref{eq:convergence}.
	
	Then, by \eqref{eq:convergence}, the sequence $\{J(V_k)\}$ is monotonically nonincreasing. Since $J(V)\ge 0$, it follows that $\{J(V_k)\}$ converges. Moreover, the feasible set $\mathcal{M}$ is compact. Note that the Euclidean gradient $\nabla J(V)$ already belongs to the tangent space of $\mathcal{M}$ because $\nabla J(V)^\top V=0$, and the Cayley update defines a feasible retraction-type curve on $\mathcal{M}$. Therefore, the iteration \eqref{eq:Vt_new} with Armijo backtracking falls within the standard Riemannian line-search framework on a compact manifold. By the corresponding convergence result for Riemannian gradient methods, see, e.g., \cite{Absil2009optimization}, every accumulation point of $\{V_k\}$ is a critical point of $J$ on $\mathcal{M}$.
	Finally, because the tangent projection leaves the Euclidean gradient unchanged here, criticality on $\mathcal{M}$ is equivalent to $\nabla J(V_\star)=0.$

	\bibliographystyle{IEEEtran}
	\bibliography{structure}
	
		%
	\end{document}